\newtheorem{theorem}{Theorem}[section]
\newtheorem{lemma}{Lemma}[section]
\newtheorem{proposition}[theorem]{Proposition}
\theoremstyle{remark}
\newtheorem{remark}{Remark}[section]
\newcommand{\h}{\hspace*{.24in}}
\def\beq{\begin{eqnarray*}}\def\eeq{\end{eqnarray*}}
\def\bq{\begin{equation}}\def\eq{\end{equation}}
\newcommand{\abs}[1]{\lvert#1\rvert}
\newcommand{\RR}{\mathbb{R}}
\newcommand{\ZZ}{\mathbb{Z}}
\newcommand{\dd}{\textup{d}}
\newcommand{\norm}[1]{\left\|#1\right\|}
\begin{document}
\title[Boundary effects on the magnetic Hamiltonian dynamics]{Boundary
  effects on the magnetic Hamiltonian dynamics in two dimensions}
\author[D. T. Nguyen, N. Raymond, S. V\~u Ng\d{o}c]{Th\d{o} Nguyen
  Duc, Nicolas Raymond, San V\~u Ng\d{o}c}
\keywords{Magnetic Hamiltonian, dynamics, confinement,  scattering, boundary}
\subjclass[2010]{70H05, 37N05}

\maketitle

\begin{abstract}
  We study the Hamiltonian dynamics of a charged particle submitted to
  a pure magnetic field in a two-dimensional domain. We provide
  conditions on the magnetic field in a neighbourhood of the boundary
  to ensure the confinement of the particle. We also prove a formula
  for the scattering angle in the case of radial magnetic fields.
\end{abstract}



\section{Introduction}
\subsection{Magnetic Hamiltonian dynamics}
This article is concerned with the dynamics of a charged particle in a
smooth bounded domain $\Omega\subset\RR^2$ in the presence of a non
homogeneous magnetic field $\textbf{B}$. The motion of a particle of
charge $e$ and mass $m$ under the action of the Lorentz force can be
expressed by Newton's equation
\begin{equation}\label{eq.Newton}
m\ddot{q}= e \dot{q}\times \textbf{B}\,,
\end{equation}
where $q =(q_1, q_2,q_3)^T \in \mathbb{R}^3$. To simplify our
discussion, we assume that $e=1$ and $m=1$. The vector field
$\textbf{B}$, defined on $\Omega$, is assumed to be smooth and to
satisfy the Maxwell equation $\nabla\cdot\textbf{B}=0$. For our target
problem in two dimensions, we suppose that $\textbf{B}$ is
perpendicular to the plane $\RR^2$, \textit{i.e.},
$\textbf{B}(q)=(0,0,b(q))$. This assumption forces particles lying in
the $\RR^2$ plane and whose initial velocities are in the plane to
stay in this same plane for all time. Since a vector field in
$\mathbb{R}^3$ can be identified with a 2-form, we write the magnetic
field as $\textbf{B} = b(q) \dd q_1\wedge \dd q_2$. Then, if there is
a 1-form $\textbf{A} =A_1 \dd q_1 + A_2 \dd q_2$ such that
$\dd \textbf{A} = \textbf{B}$, we can write~\eqref{eq.Newton} in
Hamiltonian form.  Consider, for all
$(q,p)\in \mathbb{R}^2 \times \mathbb{R}^2$,
\begin{equation}\label{Ham in standard}
\mathcal{H}(q,p) = \frac{\|p-\textbf{A}(q)\|^2}{2} \h\,,
\end{equation}
where $\|.\|$ denotes the Euclidean norm on $\mathbb{R}^2$.

The matrix representing the right cross product with $\mathbf{B}$ in
the canonical basis is
\[M_{\mathbf{B}}= J^T_{\textbf{A}}-J_{\textbf{A}}\,,\]
where $J_{\mathbf{A}}$ is the Jacobian matrix of $\mathbf{A}$. Hence
Newton's equation~\eqref{eq.Newton} becomes
\[\ddot{q}=M_{\mathbf{B}}\dot{q}\,,\]
so that
\begin{equation*}
\frac{d}{dt}\left( \dot{q}+\textbf{A}(q) \right) = J_{\textbf{A}}^T \dot{q}\,.
\end{equation*} 
By introducing the momentum variable $p = \dot{q}+\textbf{A}(q) $, we
see that $\mathcal{H}(q,p)=\frac{1}{2}\norm{\dot q}^2$ is the kinetic
energy of the system, and $(q,p)$ evolves according to the Hamiltonian
flow associated with $\mathcal{H}$:
\begin{equation}\label{eq.Hamilonian}
\left\{\begin{split}
\dot{q}&=\partial_{p}\mathcal{H}(q,p)\\
\dot{p}&=-\partial_{q}\mathcal{H}(q,p)
\end{split}\right..
\end{equation}
We shall always assume that $q\mapsto b(q)$ is locally
Lipschitz-continuous, ensuring that the system~\eqref{eq.Hamilonian}
has a unique local maximal solution, thanks to the Cauchy-Lipschitz
theorem. Then, the vector potential $\mathbf{A}$ will always be chosen
to be $C^1$-smooth.

\subsection{Two questions}

From now on, we call $b$ the magnetic field and it is identified with the $2$-form 
\[b(q_1,q_2) \dd q_1 \wedge \dd q_2=\dd \left(A_{1}\dd q_{1}+A_{2}\dd q_{2}\right)\,.\]
This article addresses two classical dynamical problems: confinement and scattering.
\begin{enumerate}[-]
\item (Confinement) Consider a charged particle in the magnetized region $\Omega$. A natural question is the following:

\centerline{\enquote{Will the particle reach the boundary in finite time?}}
\noindent We will provide a precise answer to this question, depending
on the behaviour of the magnetic field at the boundary and on the
initial conditions. Our results will improve recent results by Martins
in \cite{GM17}. In particular, we will see that, even if the magnetic
field is infinite at the boundary, some trajectories can escape from
$\Omega$. This kind of (open) problems is mentioned in \cite[Section
1.4]{VT11}.
\item (Scattering) Consider a charged particle outside the magnetized
  region $\Omega$. Before it reaches the region $\Omega$, the
  trajectory is a straight line. If it enters the region $\Omega$,
  does the particle escape from it in finite time? And, if it does so,
  what is the deviation angle between the ingoing and outgoing
  directions? We will explicitly answer these questions in the case of
  radial magnetic fields and when $\Omega$ is a disc. In this case,
  the angular momentum commutes with the Hamiltonian and allows a
  reduction to a one degree of freedom system.
\end{enumerate}
For both problems, we provide numerical illustrations of our results.

These questions have intrinsic physical motivations. Their answers
allow a better understanding of the classical dynamics of charged
particles in magnetic fields.  The description of the classical
trajectories has also many applications, for instance, at the quantum
level. The quantum aspect of the trapped trajectories can be related
to the essentially self-adjoint character of the magnetic Laplacian
(see \cite{VT11, GI09, GI11, RS75}). It is also a key point to
describe the spectrum/resonances of magnetic Laplacians. As far as the
authors know, whereas the description of the magnetic dynamics has
allowed to estimate the spectrum of magnetic Laplacians (see
\cite{RVN15, HKRVN16}), no result seems to exist to estimate their
resonances near the real axis. Investigating the trapped trajectories
is a necessary step in this direction.

In the regime of large magnetic field and small energy, a special
treatment of the confinement problem can be done and takes advantage
of the near-integrable structure of the Hamiltonian dynamics, either
via Birkhoff normal form~\cite{RVN15}, or KAM
theorems~\cite{castilho01}. On the contrary, our results here will
give more explicit initial conditions and allow regimes where the
guiding center motion is not necessarily meaningful.

\subsection{Organization of the article}
The article is organized as follows. In Section \ref{sec.main-results}, we state our main results about confinement and scattering. Section \ref{sec.proofs} is devoted to the proofs.


\section{Statements}\label{sec.main-results}

\subsection{Confinement problem}

\subsubsection{Tubular coordinates}
In order to state our results, it is convenient to introduce tubular coordinates near the boundary of $\Omega$, following the analysis of \cite{GM17}.

We assume that the connected components of $\partial\Omega$ are
$C^2$-smooth closed curves without self-intersections. Let
$\mathcal{C}$ be a connected component of $\partial \Omega$. It can be
parametrized by its arc length
$\gamma : \mathbb{R}/L\mathbb{Z} \to \mathcal{C}$ where $L$ is the
length of $\mathcal{C}$.

There exists $\delta>0$ such that
\begin{equation}\label{eq.psi}
\psi :\left\{ 
\begin{split}
  (0,\delta)\times \mathbb{R}/L\mathbb{Z} &\to\Omega_{\mathcal{C}}(\delta)\\
  (n,s)\qquad &\mapsto\gamma(s)+nN(s)=q
\end{split}\right.
\end{equation}
is a smooth diffeomorphism. $N(s)$ denotes the inward pointing normal at $\gamma(s)$ and
\[\Omega_{\mathcal{C}}(\delta)=\{q\in\Omega : \dd \left(x,\mathcal{C}\right)<\delta\}\,.\]
Note that
\begin{equation}\label{eq.Bns}
\textbf{B} = b(q) \dd q_1\wedge \dd q_2 = b(\psi(n,s))(1-n\kappa(s)) \dd s\wedge \dd n\,,
\end{equation}
where $\kappa(s)$ is the signed curvature of $\mathcal{C}$ at $\gamma(s)$.
In these coordinates, we can write
\begin{equation*}
\textbf{A} = A_n(n,s) \dd n + A_s(n,s) \dd s
\end{equation*}
with $A_n , A_s$ defined on $(0,\delta)\times \mathbb{R}/L\mathbb{Z}$ such that
\begin{equation}
  \frac{\partial A_s}{\partial n} - \frac{\partial A_n}{\partial s} =:
  B(n,s)=- b(\psi(n,s))(1-n\kappa(s))\,.
\end{equation} 
Via the tubular coordinates, we can define the symplectic change of coordinates
\begin{equation}\label{eq.Psi}
\Psi :\left\{ 
\begin{split}
(0,\delta)\times\mathbb{R}/L\mathbb{Z}\times\mathbb{R}^2&\to\Omega_{\mathcal{C}}(\delta)\times\mathbb{R}^2\\
(n,s, p_{n}, p_{s})&\mapsto (\psi(n,s),((\dd  \psi)_{(n,s)}^{-1})^\mathrm{T}(p_{n}, p_{s}))=(q,p)
\end{split}\right.,
\end{equation}
where we have explicitly $p=(1-n\kappa(s))^{-1}p_{s}\gamma'(s)+p_{n}N(s)$.

The Hamiltonian takes the form (see Lemma \ref{lem.cartesian.to.normal}):
\begin{equation}\label{eq.Htubular}
H(n,s,p_n,p_s) = \frac{1}{2}(p_n - A_n(n,s) )^2 + \frac{(p_s-A_s(n,s))^2}{2(1-\kappa(s)n)^2}\,.
\end{equation}

\subsubsection{General confinement theorems}
We can now state our confinement results.
Our first theorem provides a sufficient condition on $\mathbf{B}$ so that no trajectory can escape from $\Omega$.
\begin{theorem}\label{theo.confinement0}
For every connected component $\mathcal{C}$ of $\partial \Omega$, we assume that
\begin{equation} \label{eq.lim.of.integral}
\lim_{n \to 0} \left\lvert \int_{n}^{\delta_{\mathcal{C}}}\int_{0}^{L_{\mathcal{C}}} B(\eta,\xi)\dd \xi \dd \eta\right\rvert  =+ \infty\,,
\end{equation}
and that there exists $M_{\mathcal{C}}\geq 0$ such that, for all $(n,s) \in (0,\delta_{\mathcal{C}})\times \mathbb{R}/L_{\mathcal{C}}\mathbb{Z}$,
\begin{equation}\label{boundedness}
\left| B(n,s)-\frac{1}{L_\mathcal{C}}\int_{0}^{L_\mathcal{C}} B(n,\xi) \dd \xi \right| \leq M_{\mathcal{C}}\,.
\end{equation}
Then the magnetic Hamiltonian dynamics is complete (i.e. no solution
of \eqref{eq.Hamilonian}, starting in $\Omega$, reaches
$\partial\Omega$ in finite time).
\end{theorem}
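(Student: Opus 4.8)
The plan is to reduce the statement to a local analysis near a single boundary component $\mathcal{C}$ and to exhibit a scalar observable along the flow which must blow up if the trajectory reaches $\partial\Omega$, yet whose time-derivative stays bounded; this makes a finite-time arrival at the boundary impossible. Throughout I would work in the tubular chart with the Hamiltonian \eqref{eq.Htubular}, after fixing a gauge adapted to $\mathcal{C}$: take $A_n\equiv 0$ and $A_s(n,s)=-\int_n^{\delta_\mathcal{C}}B(\eta,s)\,\dd\eta$, so that $\partial_n A_s=B$ and $A_s(\delta_\mathcal{C},\cdot)=0$. Writing the $s$-average $\bar A_s(n)=\frac{1}{L_\mathcal{C}}\int_0^{L_\mathcal{C}}A_s(n,\xi)\,\dd\xi$, its fluctuation $\tilde A_s=A_s-\bar A_s$, and $\bar B(n)=\frac{1}{L_\mathcal{C}}\int_0^{L_\mathcal{C}}B(n,\xi)\,\dd\xi$, one has $\partial_n\bar A_s=\bar B$. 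I would then read hypothesis \eqref{eq.lim.of.integral} as $L_\mathcal{C}\,|\bar A_s(n)|\to+\infty$ as $n\to0$ (since the double integral equals $-L_\mathcal{C}\bar A_s(n)$), while \eqref{boundedness} gives $|\partial_n\tilde A_s|=|B-\bar B|\le M_\mathcal{C}$, hence $|\tilde A_s(n,s)|\le M_\mathcal{C}\delta_\mathcal{C}$ uniformly. Together these yield $A_s(n,s)=\bar A_s(n)+O(1)\to\pm\infty$ \emph{uniformly} in $s$ as $n\to0$.

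Next I would invoke energy conservation. Since $\mathcal{H}=\frac12\|\dot q\|^2$ is constant along the flow, so is the speed, and in the tubular chart $H=\frac12\dot n^2+\frac12(1-\kappa n)^2\dot s^2=H_0$. Shrinking $\delta_\mathcal{C}$ so that $1-\kappa n$ stays bounded away from $0$, this controls $|\dot n|\le\sqrt{2H_0}$, $|\dot s|\le C$, and crucially $|p_s-A_s|=(1-\kappa n)^2|\dot s|\le\sqrt{2H_0}(1-\kappa n)\le C'$. I would then introduce the observable
\[ Q:=p_s-\tilde A_s(n,s)=(p_s-A_s)+\bar A_s(n), \]
so that, by the previous paragraph, if a trajectory approaches $\partial\Omega$ (that is, $n\to0$) then $|Q|\to+\infty$.

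The heart of the argument is the computation of $\dot Q$, and this is where I expect the main conceptual obstacle to lie. Tracking $p_s$ by itself fails: $\dot p_s=-\partial_s H$ contains the term $\dot s\,\partial_s A_s$, and $\partial_s A_s=-\int_n^{\delta_\mathcal{C}}\partial_s B\,\dd\eta$ is \emph{not} controlled by the hypotheses. The point is that subtracting $\tilde A_s$ is tailored precisely so that, upon differentiating $\tilde A_s$ along the flow, the term $-\partial_s\tilde A_s\,\dot s=-\partial_s A_s\,\dot s$ cancels the dangerous contribution in $\dot p_s$. A direct computation then leaves only
\[ \dot Q=-(B-\bar B)\,\dot n-(1-\kappa n)\,n\,\kappa'(s)\,\dot s^2, \]
whose first term is bounded by $M_\mathcal{C}\sqrt{2H_0}$ through \eqref{boundedness} and whose second term is purely geometric of size $O(\delta_\mathcal{C})$. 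Hence $|\dot Q|\le K$ for a constant $K$. To conclude, I would argue by contradiction: if some solution reached $\partial\Omega$ at a finite time $T$, then for $t\nearrow T$ it would remain inside $\Omega_\mathcal{C}(\delta_\mathcal{C})$ with $|Q(t)|\to+\infty$, contradicting $|Q(t)|\le|Q(t_0)|+K(T-t_0)<\infty$.

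The principal difficulties I anticipate are twofold. First, identifying the correct observable $Q$ and verifying the cancellation that eliminates the uncontrolled $\partial_s A_s$ contribution — this is the genuinely novel step and the reason hypothesis \eqref{boundedness} is stated for the $s$-fluctuation of $B$ rather than for $B$ itself. Second, handling the curvature term $(1-\kappa n)\,n\,\kappa'(s)\,\dot s^2$ rigorously, since under the mere $C^2$ assumption on $\partial\Omega$ the derivative $\kappa'$ need not exist pointwise; I would address this either by a slight additional regularity input on $\mathcal{C}$, or by recasting that term as a total time-derivative via $\kappa'(s)\dot s=\tfrac{\dd}{\dd t}\kappa(s(t))$ and integrating by parts, so that only $\kappa$ and the bounded factor $n$ remain and the whole contribution stays $O(\delta_\mathcal{C})$.
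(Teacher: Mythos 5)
Your argument is correct and, up to a change of gauge, identical to the paper's: your observable $Q=p_s-\tilde A_s$ is exactly the canonical momentum $p_s$ in the gauge $\mathbf{A}=\alpha(n,s)\,\dd n+f(n)\,\dd s$ used in the paper (the two potentials differ by $\dd S$ with $\partial_s S=-\tilde A_s$, $\partial_n S=\alpha$), and your formula for $\dot Q$ coincides term by term with the paper's $\dot p_s=\tilde B\,\dot n-(p_s-f)^2(1-\kappa n)^{-3}\kappa' n$. The concluding contradiction---boundedness of $f(n(t))=\bar A_s(n(t))$ along the flow versus hypothesis \eqref{eq.lim.of.integral}---is also the same, so your proposal matches the paper's proof in substance.
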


Of course, given a starting point $q\in\Omega$, only the components
$\mathcal{C}$ that bound the connected component of $q$ in $\Omega$
need to be taken into account. Actually, there is a more quantitative
version of the previous theorem.
\begin{theorem}\label{theo.confinement1}
Consider a connected component $\mathcal{C}$ of $\partial\Omega$. Let 
\[K = \sup_{s \in \mathbb{R}/L\mathbb{Z}} \vert \kappa(s)\vert\,,\qquad K' = \sup_{s \in\mathbb{R}/L\mathbb{Z}} \vert\kappa'(s) \vert\,.\]
We assume that, for some $\epsilon\in(0,1)$, $\delta$ satisfies $0<\delta \leq \epsilon/K$. We assume that there exists $M\geq 0$ such that, for all $(n,s) \in (0,\delta)\times\mathbb{R}/L\mathbb{Z}$,
\begin{equation}\label{boundness}
\left| B(n,s)-\frac{1}{L}\int_{0}^{L} B(n,\xi) \dd \xi \right| \leq M\,.
\end{equation}
Consider $T>0$ and $q(t) = \psi(n(t),s(t)) $ a trajectory contained in
$\Omega_{\mathcal{C}}(\delta)$ for $t\in [0,T]$ with energy $H_0$. Let
\begin{equation}\label{eq.fn}
f(n) = -\frac{1}{L}\int_{n}^{\delta}\int_{0}^{L} B(\eta,\xi)\dd \xi \dd \eta
\end{equation}
and assume that
\begin{equation} \label{eq:f very large}
\liminf_{n \to 0}  |f(n)|  > C(T)\,,
\end{equation}
where
\begin{multline*}
C(T) = \left| \dot{s}(0)[1-\kappa(s(0))n(0)] + \int_{n(0)}^{\delta}\int_{0}^{L} B(\eta,\xi)\dd \xi \dd \eta \right|\\
+\sqrt{2H_0}(1+\epsilon) +\left( M\sqrt{2H_0} + \frac{2H_0 K' N}{1-\epsilon} \right)  T\,.
\end{multline*}
Let $g$\footnote{such a function g always exists.} be a continuous and strictly decreasing function  such that 
\[\lim_{n\to0} g(n)=\liminf_{n\to0}|f(n)|\,,\qquad g\leq |f|\,\quad \mathrm{ on }\,\, [0,\delta]\,.\] 
Then, $g$ takes the value $C(T)$ and, for all $t\in[0,T)$,
\begin{equation}
n(t)> g^{-1}(C(T))\,.
\end{equation}
\end{theorem}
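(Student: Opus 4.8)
The plan is to produce an almost-conserved quantity along the flow, whose total variation over $[0,T]$ is controlled by the two smallness inputs of the statement---the tangential fluctuation bound \eqref{boundness} and the curvature data $K,K'$---and then to read off the lower bound on $n(t)$ from the divergence hypothesis \eqref{eq:f very large}.

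First I would write Hamilton's equations for the Hamiltonian \eqref{eq.Htubular}. The relations $\dot n=p_n-A_n$ and $\dot s=(p_s-A_s)/(1-\kappa n)^2$, together with conservation of energy $H=H_0$, give the pointwise bounds $|\dot n|\le\sqrt{2H_0}$ and $|\dot s\,(1-\kappa n)|\le\sqrt{2H_0}$. Since the hypothesis $\delta\le\epsilon/K$ forces $|\kappa(s)n|\le\epsilon$, hence $1-\kappa n\in[1-\epsilon,1+\epsilon]$, these yield $|p_s-A_s|=|\dot s\,(1-\kappa n)|\,(1-\kappa n)\le\sqrt{2H_0}\,(1+\epsilon)$. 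These kinematic bounds feed every estimate below.

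The heart of the matter is the quantity $\Phi(t)=p_s(t)-A_s(n(t),s(t))+f(n(t))$. Using $\dot p_s=-\partial_s H$, the identity $B=\partial_n A_s-\partial_s A_n$, and the chain rule, a direct computation gives
\[
\frac{\dd}{\dd t}\big(p_s-A_s\big)=-\dot n\,B(n,s)-\dot s^{\,2}(1-\kappa n)\,\kappa'(s)\,n .
\]
Because $f'(n)=\frac1L\int_0^L B(n,\xi)\,\dd\xi$ by \eqref{eq.fn}, adding $f'(n)\dot n$ removes precisely the $s$-average of $B$ and leaves
\[
\dot\Phi=-\dot n\Big(B(n,s)-\frac1L\int_0^L B(n,\xi)\,\dd\xi\Big)-\dot s^{\,2}(1-\kappa n)\,\kappa'(s)\,n .
\]
The first factor is bounded by $M$ thanks to \eqref{boundness} and $|\dot n|\le\sqrt{2H_0}$, while $\dot s^{\,2}(1-\kappa n)=\big(\dot s(1-\kappa n)\big)^2/(1-\kappa n)\le 2H_0/(1-\epsilon)$ and $|\kappa'(s)|\,n\le K'\delta$ control the curvature term; hence $|\dot\Phi|\le M\sqrt{2H_0}+2H_0K'\delta/(1-\epsilon)$. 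Integrating over $[0,t]\subset[0,T]$ and isolating $f(n(t))=\Phi(0)-\big(p_s-A_s\big)(t)+\big(\Phi(t)-\Phi(0)\big)$, the triangle inequality together with $|(p_s-A_s)(t)|\le\sqrt{2H_0}(1+\epsilon)$ yields $|f(n(t))|\le C(T)$, the initial-data term being exactly $|\Phi(0)|=|(p_s-A_s)(0)+f(n(0))|$.

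Finally I would turn this a priori bound into the geometric conclusion. Since $f(\delta)=0\le C(T)$ while $g(n)\to\liminf_{n\to0}|f(n)|>C(T)$ as $n\to0$, the continuous strictly decreasing function $g$ attains the value $C(T)$ at a unique point $n^\ast=g^{-1}(C(T))$. For $t\in[0,T)$ the time integral runs over an interval strictly shorter than $[0,T]$, so the above estimate is in fact strict, $g(n(t))\le|f(n(t))|<C(T)=g(n^\ast)$, and monotonicity of $g$ gives $n(t)>n^\ast$. The step requiring the most care is the differential identity for $\frac{\dd}{\dd t}(p_s-A_s)$: one must track the curvature weight $(1-\kappa n)^{-2}$ in \eqref{eq.Htubular}, which is exactly what generates the $\kappa'$ term, and one must recognize that subtracting the $s$-average of $B$ (encoded in $f$) is what converts the a priori uncontrolled transport term $-\dot n B$ into the fluctuation governed by \eqref{boundness}.
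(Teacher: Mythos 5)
Your proposal is correct and follows essentially the same route as the paper: the quantity $\Phi=p_s-A_s+f(n)$ you track is exactly the momentum $p_s$ in the gauge $\mathbf{A}=\alpha(n,s)\,\dd n+f(n)\,\dd s$ that the paper fixes at the outset, and the kinematic bounds, the splitting of $B$ into its $s$-average plus an $M$-bounded fluctuation, and the concluding argument via $g$ all coincide. (Your initial term evaluates to $\lvert \dot s(0)(1-\kappa(s(0))n(0))^2+f(n(0))\rvert$, which matches the paper's own proof rather than the $C(T)$ printed in the statement --- a discrepancy already present in the paper.)
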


\begin{remark}
  Theorems \ref{theo.confinement0} and \ref{theo.confinement1} are
  improvements of \cite[Theorems 1\&2]{GM17}. They tell us that a
  particle in $\Omega$ never reaches the boundary of $\Omega$. In
  \cite{GM17}, it is assumed that $\partial_{s}B$ is integrable:
\begin{equation}\label{eq.intMartins}
\sup_{s\in\mathcal{C}}\int_{0}^N |\partial_{s}B(m,s)|\dd m<+\infty\,,
\end{equation}
and the question of removing this
assumption was explicitly mentioned as important (\emph{op. cit.,
  section 3}). Our theorems give a partially positive answer to this question,
thus allowing for magnetic fields having wilder tangential behaviors.
\begin{enumerate}[-]
\item Theorem \ref{theo.confinement0} generalizes \cite[Theorem
  1]{GM17} by replacing the integrability assumption by
  \eqref{boundedness}. This allows in particular to consider a
  magnetic field (on the unit disc) of the form
  \[B(n,s)=\frac{1}{n}+\sin\left(\frac{\chi(s)}{n}\right)\,,\]
  where $\chi$ is a smooth function supported in $(-\pi,\pi)$ such
  that $\chi'(0)\neq 0$ and $\chi(0)=0$. For this magnetic field, it
  is easy to check that \eqref{eq.intMartins} is not satisfied.  In
  fact, the $C^\infty$ smoothness is actually not required; in order to
  draw Figure \ref{fig:Pic1}, we took, for simplicity, a small
  perturbation of $\chi(s)=\arcsin(\sin(s))$.

\begin{figure}[h] 
\centering
\includegraphics[width=0.5\textwidth ,height = 6 cm]{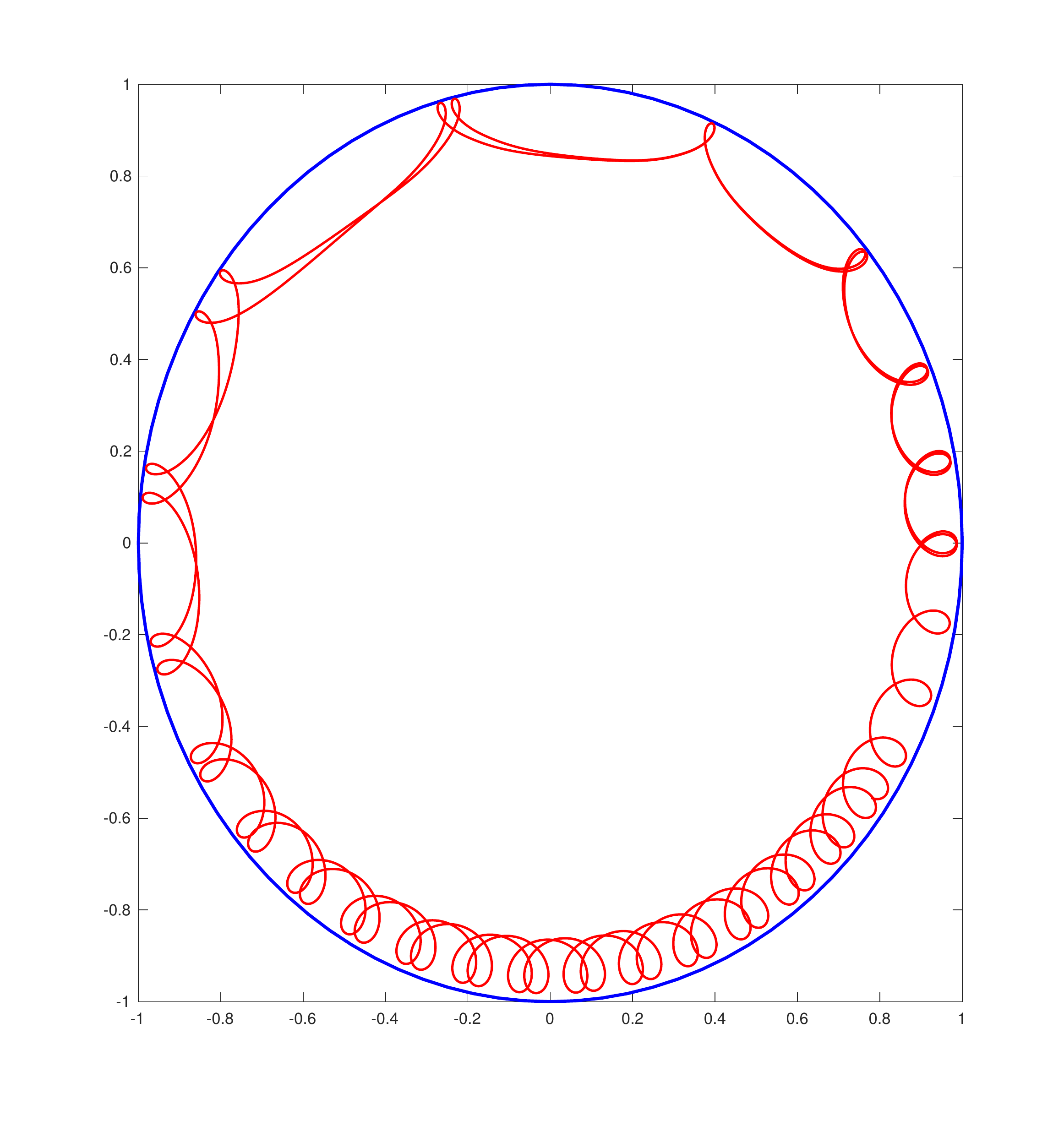}
\caption{A trajectory obtained with a magnetic field on the unit disc
  that is strong near the boundary with a non-integrable tangential derivative:\\
  $\displaystyle B(q) =\frac{1}{1-\sqrt{q_{1}^2+q_{2}^2}} + \sin\left(\frac{\arcsin(q_2)}{1-\sqrt{q_{1}^2+q_{2}^2}} \right) + 5q_1^3 -7q_2 \,.  $}
   \label{fig:Pic1}
\end{figure}

\item An explicit lower bound for the escaping time of a magnetized region is given in \cite[Theorem 2]{GM17} in the case when
\begin{equation}\label{eq.Mh}
B(n,s) = \frac{M}{n^\alpha} + h(n,s)\,, \qquad \alpha \geq 1\,.
\end{equation}
where $M \neq 0$ and $h$ is bounded and smooth in
$\Omega_{\mathcal{C}}(\delta)$, and so that \eqref{eq.intMartins}
holds. Theorem \ref{theo.confinement1} implies \cite[Theorem 2]{GM17},
and also provides an explicit lower bound for magnetic fields that are
not in the form \eqref{eq.Mh}, see Figure \ref{fig:Con7} where the
magnetic field changes sign infinitely many times.
\end{enumerate}

\end{remark}
\begin{figure}[h] 
\centering
\includegraphics[width=0.5\textwidth, height = 6 cm]{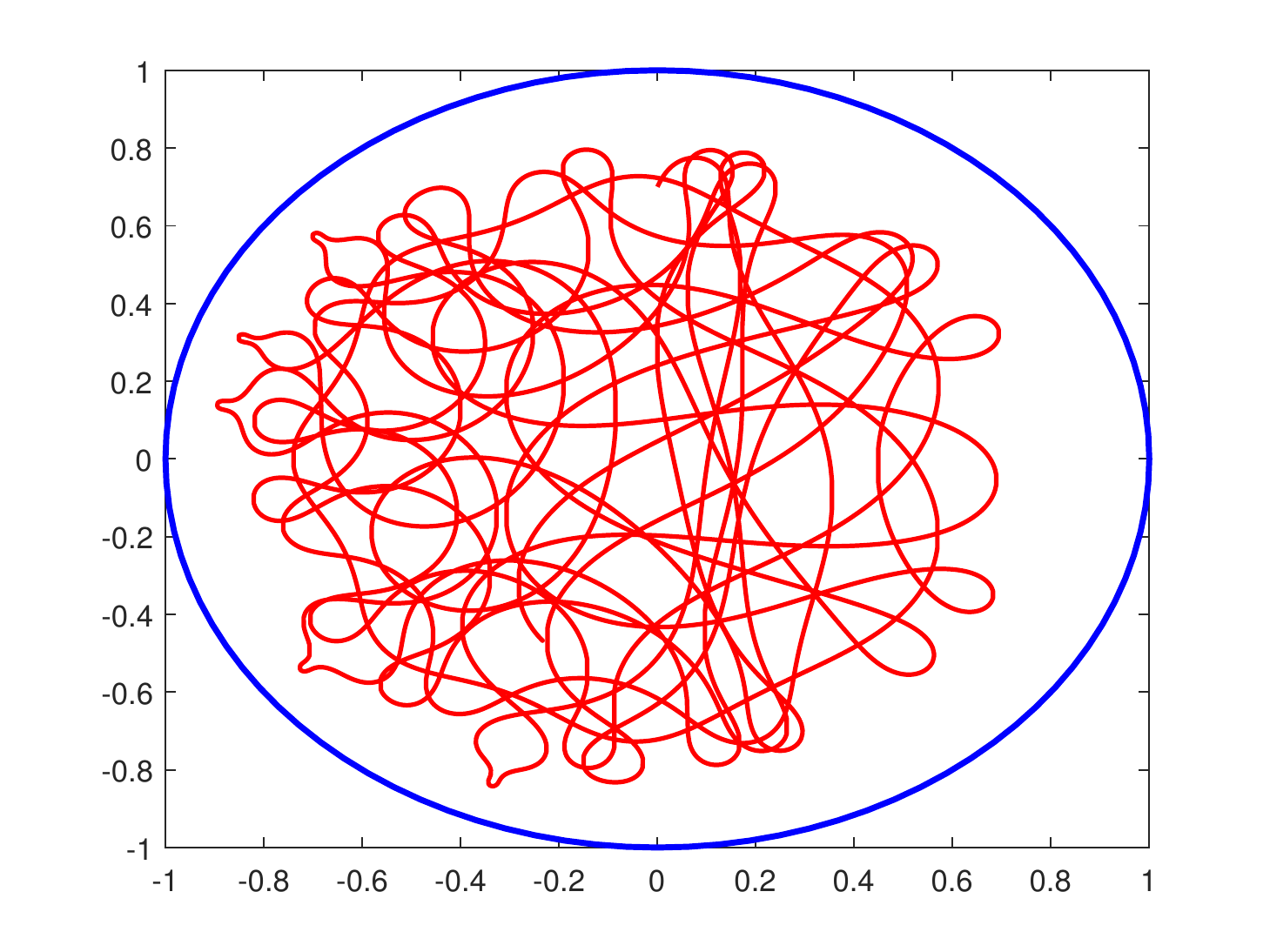}
\caption{A trajectory obtained with a magnetic field on the unit disc
  that strongly oscillates near the boundary:\\
  $\displaystyle B(q) = \frac{\frac{1}{2}-\sin\left(
      \frac{1}{1-\sqrt{q_1^2+q_2^2}} \right)
  }{(1-\sqrt{q_1^2+q_2^2})^2} +10q_1 -2q_1^2-10q_2^2\,.  $
  } \label{fig:Con7}
\end{figure}

\subsubsection{Confinement results in the radial case}
When $\Omega=D(0,1)$ and when $B$ is radial, the dynamics is
completely integrable, and hence can be entirely described by a one
degree of freedom Hamilonian; concerning the confinement problem, this
of course leads to stronger results.
\begin{proposition}\label{prop.of.confinement2}
  Let $q(t)=(q_1(t),q_2(t))$ be a solution to \eqref{eq.Hamilonian}
  starting at $t=0$ from inside the unit disc. If the initial data
  $(q(0), \dot{q}(0))$ satisfies either~\textbf{H1} or \textbf{H2}
  below:
\begin{enumerate}
\item[\textbf{H1}:]
\begin{equation}\label{eq.criterion.1}
\liminf_{r\to 1^-}\left|\frac{1}{2\pi}\int_{\|q(0)\|\leq \|q\|\leq r} B(q)\dd q-\det(q(0),\dot{q}(0))\right|>\|\dot{q}(0)\|\,,
\end{equation}
\item[\textbf{H2}:]
\begin{equation}\label{criterion2}
\liminf_{r\to 1^-}\left|\frac{1}{2\pi}\int_{\|q(0)\|\leq \|q\|\leq r} B(q)\dd q-\det(q(0),\dot{q}(0))\right|=\|\dot{q}(0)\|\,,
\end{equation}
and
\begin{equation}\label{criterion2'}
\limsup_{r\to 1^-}\frac{\left|\frac{1}{2\pi}\int_{\|q(0)\|\leq \|q\|\leq r} B(q)\dd q-\det(q(0),\dot{q}(0)) \right|-\Vert \dot{q}(0) \Vert}{r-1} <0\,,
\end{equation}
\end{enumerate}
then the solution exists for all $t\geq 0$, and there exists $\eta \in[0,1)$ such that
\begin{equation}
\forall t\geq 0\,,\quad\Vert q(t) \Vert < \eta\,.
\end{equation} 
\end{proposition}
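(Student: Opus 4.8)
The plan is to exploit the complete integrability in the radial case by producing two conserved quantities and reducing the problem to a single inequality constraint on the radius $r(t)=\|q(t)\|$. First I would fix the rotationally symmetric gauge $\mathbf{A}(q)=f(\|q\|)(-q_2,q_1)$, where $r^2 f(r)=\frac{1}{2\pi}\int_{\|q\|\le r}B(q)\,\dd q=:\frac{1}{2\pi}\Phi(r)$, so that $\dd\mathbf{A}=B\,\dd q_1\wedge\dd q_2$ and $\mathbf{A}$ is $C^1$ on the open disc. With this choice $\mathcal{H}$ is invariant under rotations, so the canonical angular momentum
\begin{equation*}
p_\theta:=\det(q,p)=\det(q,\dot q)+\frac{1}{2\pi}\Phi(\|q\|)
\end{equation*}
is conserved along the flow. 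Together with the conservation of energy, which since $\mathcal{H}=\frac12\|\dot q\|^2$ gives the constant speed $\|\dot q(t)\|=\|\dot q(0)\|$, this reduces the dynamics to one degree of freedom.

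Next I would record the key identity. Writing
\begin{equation*}
G(r):=\frac{1}{2\pi}\int_{\|q(0)\|\le\|q\|\le r}B(q)\,\dd q-\det(q(0),\dot q(0))=\frac{1}{2\pi}\Phi(r)-p_\theta,
\end{equation*}
conservation of $p_\theta$ yields, for every time $t$ and $r=\|q(t)\|$,
\begin{equation*}
G(\|q(t)\|)=\frac{1}{2\pi}\Phi(\|q(t)\|)-p_\theta=-\det(q(t),\dot q(t)).
\end{equation*}
Combining this with the elementary bound $|\det(q,\dot q)|\le\|q\|\,\|\dot q\|$ and the constant speed gives the pointwise constraint
\begin{equation*}
|G(\|q(t)\|)|\le\|q(t)\|\,\|\dot q(0)\|\le\|\dot q(0)\|,
\end{equation*}
valid as long as $\|q(t)\|<1$. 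This is the inequality that the two hypotheses will contradict near the boundary.

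Then I would run the confinement argument. Under \textbf{H1}, $\liminf_{r\to1^-}|G(r)|>\|\dot q(0)\|$ produces $r_1<1$ with $|G(r)|>\|\dot q(0)\|\ge r\|\dot q(0)\|$ on $(r_1,1)$, so the constraint fails there; hence the continuous function $r(t)$, which lies in the closed allowed set and satisfies $r(0)\le r_1$ (the constraint holds at $t=0$ by the identity above), can never enter $(r_1,1)$ and stays $\le r_1$. Under \textbf{H2}, I would unwind the sign: since $r-1<0$, condition \eqref{criterion2'} gives $c>0$ with $|G(r)|\ge\|\dot q(0)\|+c(1-r)$ near $r=1$; plugging into the constraint yields $(1-r)(c+\|\dot q(0)\|)\le0$, again impossible for $r<1$, so the same barrier $r_1<1$ exists. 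In both cases $\|q(t)\|\le r_1<1$ on the maximal interval of existence.

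Finally, since the trajectory stays in the compact set $\{\|q\|\le r_1\}\subset\Omega$, where $B$ is locally Lipschitz and the speed is constant, there is no finite-time blow-up, so the solution is global and $\|q(t)\|<\eta$ for any chosen $\eta\in(r_1,1)$. The main obstacle is the clean derivation and correct normalization of the conserved angular momentum, i.e.\ the reduction to one degree of freedom; after that everything is elementary. The only genuinely delicate point is the borderline case \textbf{H2}, where equality in the $\liminf$ forces one to use the quantitative first-order decay rate \eqref{criterion2'} to recover a strict barrier.
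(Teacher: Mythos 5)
Your proof is correct and follows essentially the same route as the paper: conservation of the angular momentum and of the energy reduce the problem to the pointwise constraint $|G(r(t))|\le r(t)\,\|\dot q(0)\|$, which is exactly the paper's condition $V(r(t))\le H_0$ for the reduced radial Hamiltonian, and both arguments then exclude an annulus $(r_1,1)$ near the boundary. Your write-up is marginally more explicit on the borderline case \textbf{H2} and on global existence (and avoids the polar singularity at the origin by working with $\det(q,p)$ directly), but the underlying mechanism is identical.
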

One can find situations where none of the hypothesis of Proposition
\ref{prop.of.confinement2} hold and the trajectory can be arbitrarily
close to the boundary. 
(see Figure \ref{fig.escape-turning}: this unusual behavior can be
explained by a critical point of the radial Hamiltonian at $r=1$, see
\eqref{eq.red.H}).
\begin{figure}[h] 
\centering
\includegraphics[width=0.4\textwidth]{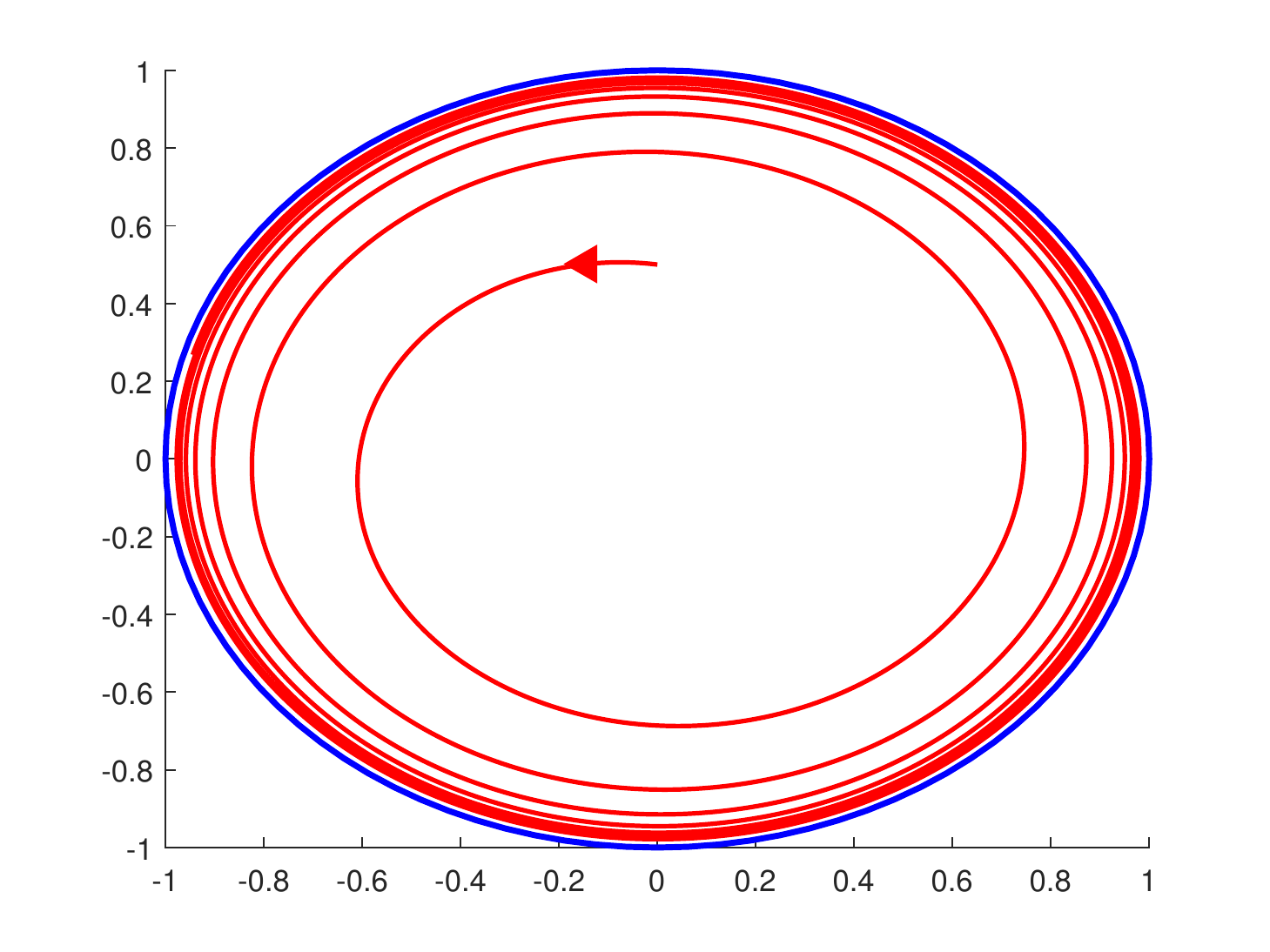}
\caption{$B(r)= e^{-r}-\frac{2}{r} $ .} \label{fig.escape-turning}
\end{figure}

If the magnetic field is $L^1$-integrable near the boundary of
$\Omega$, we can prove that there exist trajectories escaping from
$\Omega$ in finite time.  In particular, even if the magnetic field is
infinite at the boundary, the confinement is not ensured.
\begin{proposition}\label{prop.confinement.3}
When
\begin{equation}
\label{equ:L1}
\limsup_{r\to 1^-} \left\lvert \int_{D(0,r)} B(q)\dd q \right\rvert < +\infty\,,
\end{equation} 
there exists a trajectory starting in $\Omega$ and reaching the boundary in finite time.
\end{proposition}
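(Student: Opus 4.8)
The plan is to exploit the complete integrability in the radial case: reduce to a one-degree-of-freedom system and then launch a particle with enough radial energy to overcome a barrier that the $L^1$ hypothesis forces to stay bounded up to $\partial\Omega$. Working in polar coordinates $q = r(\cos\theta,\sin\theta)$, I would first record the two conserved quantities. Since $B$ is radial the Hamiltonian is rotation-invariant, so the canonical angular momentum is conserved; writing it out and using $\det(q,\dot q)=r^2\dot\theta$ gives
\[
r^2\dot\theta = \det(q(0),\dot q(0)) - \frac{1}{2\pi}\int_{\|q(0)\|\le \|q\|\le r} B(q)\,\dd q =: G(r),
\]
which is exactly the quantity already appearing in Proposition \ref{prop.of.confinement2}. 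Conservation of energy gives $\|\dot q\|^2 = \dot r^2 + r^2\dot\theta^2 \equiv \|\dot q(0)\|^2$, so the radial motion obeys the reduced equation
\[
\dot r^2 = \|\dot q(0)\|^2 - \frac{G(r)^2}{r^2},
\]
i.e. a one-dimensional system with effective potential $V_{\mathrm{eff}}(r)=G(r)^2/(2r^2)$ (this is the radial Hamiltonian of \eqref{eq.red.H}).

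Next I would bring in the hypothesis \eqref{equ:L1}. Writing $\phi(r)=\frac{1}{2\pi}\int_{D(0,r)}B$, the condition $\limsup_{r\to1^-}\bigl|\int_{D(0,r)}B\bigr|<\infty$ means precisely that $\phi$, and hence $G$, is bounded on some collar $[r_0,1)$, say $|G(r)|\le G_\star$ there. The idea is then to start the particle from a point $q(0)$ with $r_0=\|q(0)\|\in(0,1)$ and with a purely radial, outward-pointing initial velocity. This makes $\det(q(0),\dot q(0))=0$, so $G$ (and thus $V_{\mathrm{eff}}$) is bounded on $[r_0,1)$ uniformly in the launch speed $v:=\|\dot q(0)\|$. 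Choosing $v$ large enough that $v^2>G_\star^2/r_0^2$ forces
\[
\dot r^2 = v^2 - \frac{G(r)^2}{r^2} \ge v^2 - \frac{G_\star^2}{r_0^2} > 0 \qquad \text{on } [r_0,1),
\]
so $\dot r$ never vanishes and keeps its initial positive sign: $r$ increases strictly and monotonically, and the escape time satisfies
\[
T = \int_{r_0}^{1} \frac{\dd r}{\sqrt{v^2 - G(r)^2/r^2}} \le \frac{1-r_0}{\sqrt{v^2 - G_\star^2/r_0^2}} < +\infty,
\]
so this trajectory reaches $\partial\Omega$ in finite time.

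The conceptual crux is that the $L^1$ hypothesis keeps the magnetic contribution to the effective potential bounded all the way to the boundary, so a sufficiently energetic radial launch cannot be turned back; by contrast, when the flux diverges (as in Theorem \ref{theo.confinement0}) the barrier $V_{\mathrm{eff}}$ blows up and confinement holds. The main points requiring care are: (i) upgrading $\limsup_{r\to1^-}\bigl|\int_{D(0,r)}B\bigr|<\infty$ to a genuine uniform bound $|G|\le G_\star$ on a collar, rather than merely along a sequence; (ii) checking that the reduced radial equation genuinely encodes a solution of \eqref{eq.Hamilonian} and that this solution persists up to the boundary — here conservation of $\|\dot q\|$ keeps the speed constant, ruling out finite-time blow-up, so the trajectory is well-defined until it meets $\partial\Omega$; and (iii) confirming the absence of a turning point, which is exactly the strict inequality $v^2>G_\star^2/r_0^2$ secured by taking $v$ large. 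The finiteness of $T$ is then immediate from the uniform lower bound on $\dot r$, which is the only place where convergence of the time integral could be threatened.
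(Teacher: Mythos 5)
Your proposal is correct and follows essentially the same route as the paper: reduce to the one-degree-of-freedom radial Hamiltonian, note that \eqref{equ:L1} (together with continuity of the flux on $[0,1)$) makes the effective potential bounded up to the boundary, and launch with enough outward radial momentum so that $\dot r$ stays bounded below by a positive constant, forcing escape in finite time. The only cosmetic difference is that the paper fixes $p_\theta=0$ and calibrates $p_r(0)^2=2(|V|_\infty-V(r(0)))+v^2$, whereas you take a purely radial initial velocity and a large launch speed; both choices yield the same argument.
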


Of course, even under assumption~\eqref{equ:L1}, some trajectory may
be confined, depending on initial conditions (see Figure \ref{fig:confine} where the simulations
are performed with $B(r)=\ln^2(1-r)$).

\begin{figure}[] 
\centering
\begin{subfigure}[t]{0.5\textwidth}
\centering\includegraphics[width=0.7\textwidth]{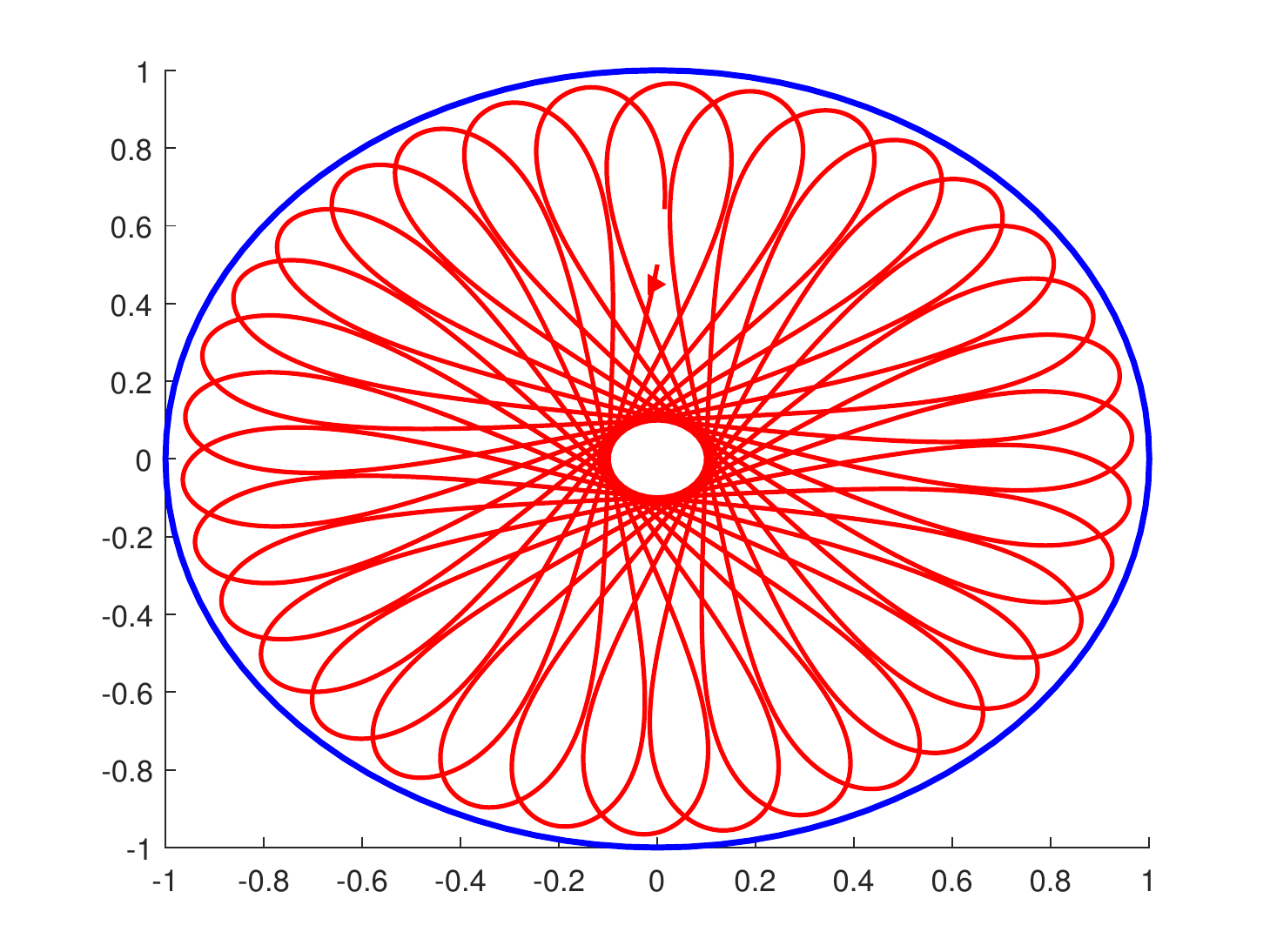}
\end{subfigure}
~
\begin{subfigure}[t]{0.5\textwidth}
\centering\includegraphics[width=0.7\textwidth]{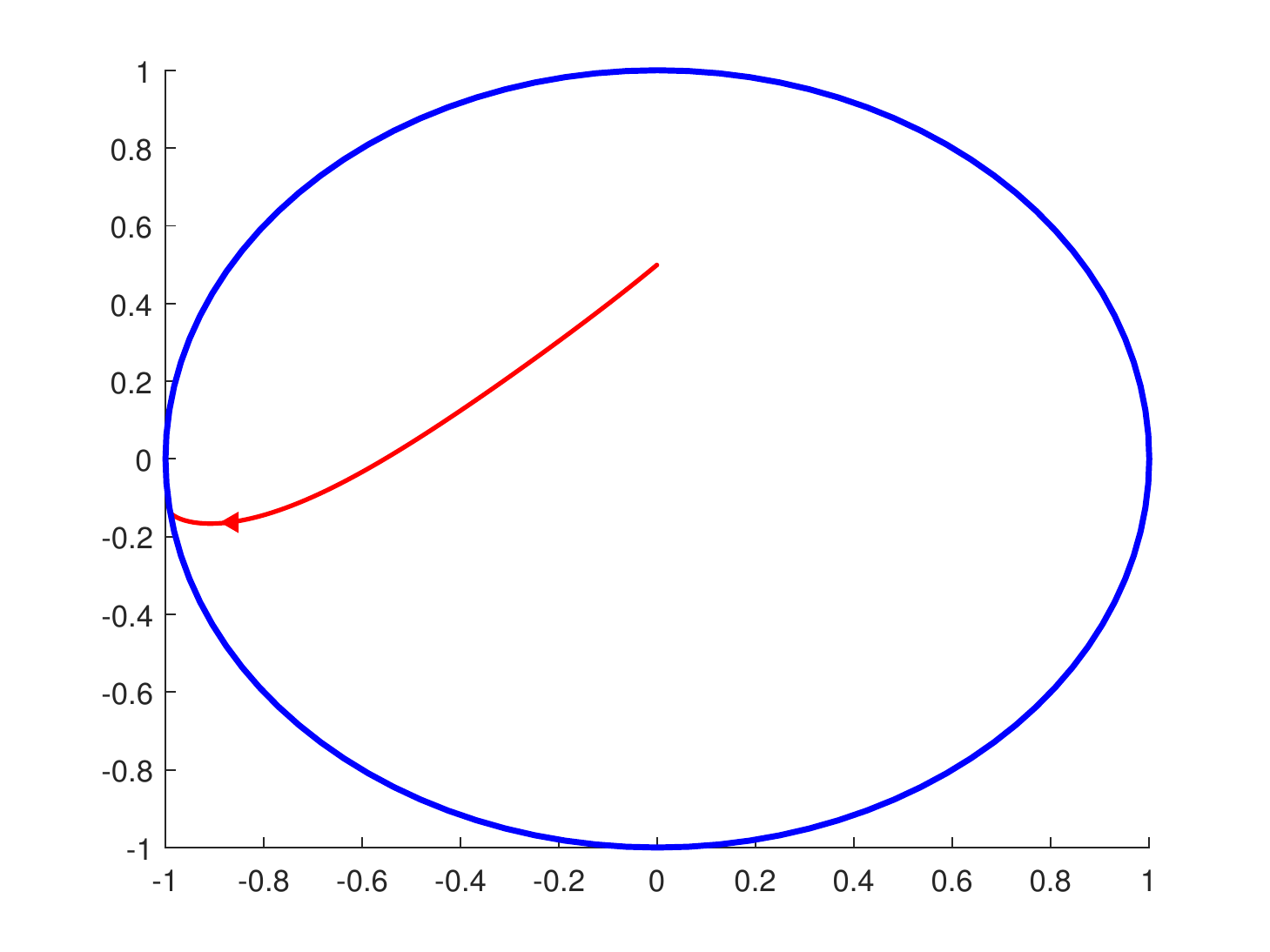}
\end{subfigure}
\caption{$B(r)=\ln^2(1-r) $: the particle is confined or not.}\label{fig:confine}
\end{figure}

\subsection{Scattering in the radial case}
Let us now describe our scattering result in the radial case. We
assume that $\mathbf{B}_{|\Omega}$ admits a locally Lipschitz
extension in a neighbourhood of $\Omega$.


In polar coordinates, we have
\[
\textbf{B} = B(r)r \dd r \wedge \dd \theta=\dd \left(G(r) \dd
  \theta\right)\,,
\]
where
\begin{equation*}
G(r) = \int_0^r \tau B(\tau)  \dd \tau\,.
\end{equation*}
Via the symplectic change of coordinates
\[
\begin{split}
  \mathbb{R}_{+}^*\times \RR/2\pi\ZZ\times\mathbb{R}^2&\to (D\setminus\{0\})\times\mathbb{R}^2\\
  (r,\theta,p_{r},p_{\theta})&\mapsto \left(r\cos\theta,r\sin\theta,
    \cos \theta p_r - \frac{\sin \theta}{r}p_\theta,\sin \theta p_r
    +\frac{\cos \theta}{r}p_\theta\right)=(q,p)
\end{split}\quad\,,
\]
the Hamiltonian becomes
\begin{equation}\label{eq.Ham.in.polar}
  \tilde H(r,\theta,p_r,p_\theta) = \frac{p_r^2}{2}+ \frac{(p_\theta - G(r))^2}{2r^2}\,,
\end{equation}
In particular, the angular momentum $p_{\theta}$ is constant along the
flow and we consider the reduced one dimensional Hamiltonian on
$T^*\RR^*_+$
\begin{equation}\label{eq.red.H}
  H(r,p_r) := \frac{p_r^2}{2}+V(r)\,,\qquad V(r) :=  \frac{(p_\theta - G(r))^2}{2r^2}\,,
\end{equation}
where $V\in C^1(\RR^*_+)$.  We notice that (see, for example, Lemma
\ref{lem.cartesian.to.normal})
\[v_{r}=p_{r}\,,\qquad v_{\theta}=r^{-1}(p_{\theta}-G(r))\,,\]
where $v_{r}$ and $v_{\theta}$ are the classical radial and tangential components of the velocity $v$.

We consider a charged particle with energy $H_0$ arriving into the
disk with velocity $v_{1}$. In particular,
$H_{0}=\frac{1}{2}\|v_{1}\|^2$. If the particle escapes from the disc
with velocity $v_{2}$ (see Figure \ref{Scattering}), we have
$\norm{v_2}=\norm{v_1}$, and a natural question is to compute the
(scattering) angle between these two vectors. Let
$\omega \in (-\pi,\pi]$ be the oriented angle between $v_1$ and $v_2$.

\begin{theorem}\label{theo.scattering}
  Consider a trajectory starting on $\partial\Omega$, with velocity
  $v_{1}\neq 0$ and entering $\Omega$. This means that either
  $v_{r}<0$, or $v_{r}=0$ and $\frac{B(1)}{v_{\theta}}<-1$. We define
  $\gamma$ the angle between the outward pointing normal and $v_{1}$.

We also assume
\begin{enumerate}[\rm i.]
\item either that the equation $V(r)=H_{0}$ has a solution for $r\in(0,1)$ and that the closest solution to $1$, denoted by $r^*$, satisfies $V'(r^*)<0$.
\item or, only when $p_{\theta}=0$, that the equation $V(r)=H_{0}$ has no solution.
\end{enumerate}

Then the trajectory escapes from $\Omega$ in finite time with velocity $v_{2}$, and we can compute the scattering angle $\omega \mod 2\pi$:
\begin{enumerate}[\rm i.]
\item either the trajectory does not pass through the origin and
\[\omega=\alpha+\pi-2\gamma\,,\]
where
\begin{equation}\label{alpha angle}
\alpha = 2 \int_{r^*}^{1}  \frac{p_\theta-G(r)}{r\sqrt{2H_0 r^2- (p_\theta-G(r))^2} } \dd r\,,
\end{equation}
\item or the trajectory passes through the origin (in this case $p_{\theta}=0$) and
\[\omega=\alpha-2\gamma\,,\]
where
\begin{equation}\label{alpha angle}
\alpha = 2 \int_{0}^{1}  \frac{-G(r)}{r\sqrt{2H_0 r^2- G(r)^2} } \dd r\,.
\end{equation}
\end{enumerate}

\end{theorem}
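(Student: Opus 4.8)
The plan is to exploit the complete integrability of the radial problem and to reduce everything to the one–dimensional Hamiltonian $H(r,p_r)=\tfrac12 p_r^2+V(r)$ of \eqref{eq.red.H}, along which both $H_0$ and the angular momentum $p_\theta$ are conserved. Since $\dot r=p_r$, energy conservation gives $\dot r^2=2(H_0-V(r))$, so the particle behaves like a one–dimensional point mass in the effective potential $V$. First I would settle the radial motion and the finite–time escape. Under hypothesis (i) the particle enters ($\dot r\le 0$) and $r$ decreases until it meets the largest root $r^*<1$ of $V=H_0$; on $(r^*,1)$ one checks $V<H_0$ (this is consistent with entering, and with $V'(1)>0$, i.e. $B(1)/v_\theta<-1$, in the grazing case $v_r=0$), so $\dot r$ keeps its sign on each leg. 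The condition $V'(r^*)<0$ means $r^*$ is a simple zero of $H_0-V$, whence $H_0-V(r)\sim |V'(r^*)|(r-r^*)$ near $r^*$ and the escape time
\[
T=2\int_{r^*}^{1}\frac{\dd r}{\sqrt{2(H_0-V(r))}}
\]
converges, so the particle returns to $r=1$ in finite time. (Had $V'(r^*)=0$, this integral would diverge and the orbit would be confined, which is exactly why the hypothesis is needed.) Under hypothesis (ii) we have $p_\theta=0$ and $V=H_0$ has no root, so $\dot r$ never vanishes; since $G(r)=O(r^2)$ near $0$ gives $V(r)\to 0$, the integrand stays bounded at the origin and the particle reaches $r=0$ in finite time.

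Next I would compute the total angle $\alpha$ swept by the position vector. Hamilton's equation for \eqref{eq.Ham.in.polar} gives $\dot\theta=(p_\theta-G(r))/r^2$, hence $\dd\theta/\dd r=\dot\theta/\dot r$; substituting $\dot r=\pm\sqrt{2(H_0-V)}$ together with the identity $2(H_0-V(r))=(2H_0r^2-(p_\theta-G)^2)/r^2$ turns each leg into the stated integrand, and the inbound and outbound legs contribute equally (the sign of $\dot r$ flips but so does the orientation of $\dd r$), which produces the factor $2$ in the formulas for $\alpha$.

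Finally I would convert the swept angle $\alpha$ into the scattering angle $\omega$ by a geometric argument at the boundary. Decomposing the direction of the velocity in the fixed plane as $\phi=\theta+\mu$, where $\mu$ is the angle from the outward normal $\hat r$ to $v$, we have $\omega=\Delta\theta+(\mu_{\mathrm{out}}-\mu_{\mathrm{in}})$. At $r=1$ the conservation of $p_\theta$ forces $v_\theta^{\mathrm{out}}=v_\theta^{\mathrm{in}}$, while energy conservation forces $|v_r^{\mathrm{out}}|=|v_r^{\mathrm{in}}|$ with opposite sign; hence $v_{\mathrm{out}}$ is the reflection of $v_{\mathrm{in}}$ across the tangential direction, so $\mu_{\mathrm{out}}=\pi-\mu_{\mathrm{in}}\pmod{2\pi}$, and since $\gamma=\mu_{\mathrm{in}}$ we get $\mu_{\mathrm{out}}-\mu_{\mathrm{in}}=\pi-2\gamma$. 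In case (i) the orbit avoids the origin and $\Delta\theta=\alpha$, giving $\omega=\alpha+\pi-2\gamma$. In case (ii) the orbit crosses the origin: the reduced variable $r$ turns around at $0$ while the polar angle $\theta$ jumps by $\pi$ (the position vector flips, although the Cartesian velocity stays continuous), so $\Delta\theta=\alpha+\pi$ and $\omega=\alpha+2\pi-2\gamma\equiv\alpha-2\gamma\pmod{2\pi}$.

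I expect the main obstacle to be the bookkeeping at the origin in case (ii): one must leave the polar chart, follow the genuinely smooth Cartesian flow through $q=0$, and re-enter polar coordinates on the far side, correctly attributing the extra $+\pi$ to $\Delta\theta$; this extra $\pi$ is precisely what distinguishes the two formulas. A secondary point is the sign and orientation convention for $\gamma$ (and for $\dot\theta$), which must be fixed once and for all so that the reflection identity $\mu_{\mathrm{out}}=\pi-\mu_{\mathrm{in}}$ and the equality $\gamma=\mu_{\mathrm{in}}$ hold with consistent signs; the remaining estimates (integrability at $r^*$ and at the origin) are then routine.
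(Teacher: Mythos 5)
Your proposal is correct and follows essentially the same route as the paper: reduction to the one-dimensional radial Hamiltonian, convergence of the time-of-flight integral at the simple turning point $r^*$ (or at the origin when $p_\theta=0$), computation of the swept angle via $\dd\theta/\dd r=\dot\theta/p_r$ with the factor $2$ from the in/out symmetry and the extra $\pi$ from the polar-chart flip at the origin, and the boundary reflection identity giving the $\pi-2\gamma$ correction (which the paper phrases with complex notation, $v_1=(v_r+iv_\theta)e^{i\theta_1}$, $v_2=(-v_r+iv_\theta)e^{i\theta_2}$). No gaps.
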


\begin{figure}[!h]
 \centering
\begin{tikzpicture}
\coordinate (O) at (0,0);
\coordinate (A) at (1,0);
\coordinate (B) at (-4,3);
\coordinate (C) at (-4,-3);
\coordinate (D) at (-5,3.75);
\coordinate (E) at (-3.2,2);
\coordinate (F) at (-4,-3);

 \draw[red,thick] (0,0) circle (5);
 
 \draw[red,thick,dashed] (0,0) circle (2);
 
 \draw[color=black,very thick, ->] (-4,3) -- (-3.2,2);
 
 \draw[color=black, very thick, ->] (-4,-3) -- (-4.8,-4);
 
 \draw [thick,dashed] plot[smooth, tension = 0.4] coordinates{ (-4,3)  (-2,0) (-4,-3) };
 
 \draw[color=gray, thick, -] (-5,3.75)--(0,0);
 \draw[color=gray, thick, -] (-5,-3.75)--(0,0);
 
 \draw[color=gray, thick, -] (0,0)--(5,0);
 
 \node[scale=1] at (-3.7,2) {$\vec{v}_1$};
 \node[scale=1] at (-4,-4) {$\vec{v}_2$};
 \node[scale=1] at (1,-0.5) {$r^*$};
 
\pic [draw, ->,angle radius = 0.3 cm, "$\theta_1$", angle eccentricity=2] {angle = A--O--B};
\pic [draw, ->,angle radius = 1 cm, "$\theta_2$", angle eccentricity=1.4] {angle = A--O--C};
\pic [draw, -,thick, angle radius = 0.3 cm, "$\gamma$", angle eccentricity=2] {angle = D--B--E};
\pic [draw, ->,thick, angle radius = 0.5 cm, "$\alpha$", angle eccentricity=1.4] {angle = B--O--F};
 
\draw [decorate,decoration={brace,amplitude=10pt},xshift=0pt,yshift=0pt] (2,0) -- (0,0) node [black,midway,xshift=0.5 cm] {};
\end{tikzpicture}
\caption{ The scattering arrows. }
\label{Scattering}
\end{figure}
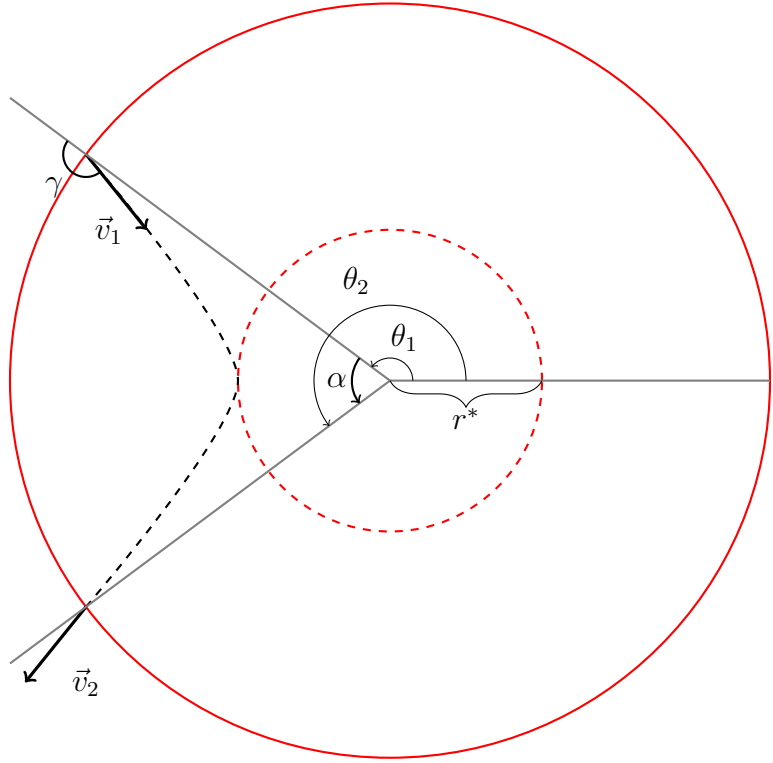


\section{Proofs}\label{sec.proofs}
\subsection{Proof of Theorems \ref{theo.confinement0} and \ref{theo.confinement1}}


To reach the boundary, the particle has to be close to a connected
component $\mathcal{C}$ of $\partial \Omega$. Thus, we can assume
that, for all $t\in[0,T)$,
\begin{equation*}
q(t)\in\Omega_{\mathcal{C}}(\delta)\,.
\end{equation*} 
Modifying the vector potential corresponds to a symplectic
transformation of the form $(q,p)\mapsto (q, p+\dd S(q))$, for some
smooth function $S$, and hence does not modify the trajectory of the
particle. Thus, we consider the function
\begin{equation*}
  \alpha(n,s) =   \frac{s}{L} \int_{0}^{L}  B(n,\xi) \dd \xi 
  -\int_0^s B(n,\xi) \dd \xi\,.
\end{equation*}
Notice that $\alpha(n,\cdot)$ is $L$-periodic. Recalling \eqref{eq.fn}
and letting $\textbf{A} = \alpha(n,s) \dd n + f(n) \dd s$, we have
$ \textbf{B}=\dd\textbf{A}$.

By \eqref{eq.Htubular}, the corresponding Hamiltonian is
\begin{equation*}
  H(n,s,p_n,p_s) = \frac{(p_n - \alpha(n,s) )^2}{2} + \frac{(p_s-f(n))^2}{2(1-\kappa(s)n)^2}\,.
\end{equation*}
Concerning Hamilton's equations, we have in particular
\begin{equation*}
\dot{n} = p_n- \alpha(n,s)\,,\qquad \dot{p}_s =\tilde{B}(n,s)\dot{n} - \frac{(p_s-f(n))^2}{(1-\kappa(s)n)^3} \kappa'(s)n\,,
\end{equation*}
where 
\begin{equation*}
  \tilde{B}(n,s) = \frac{1}{L}\int_{0}^{L}B(n,\xi) \dd \xi-B(n,s)\,.
\end{equation*}
We recall that, for all $t\in[0,T)$, $H(n(t),s(t),p_n(t),p_s(t)) = H_0$. We get
\begin{equation}\label{estimates 1}
\begin{split}
\vert \dot{n} \vert &\leq \sqrt{2H_0}\\
\abs{p_s-f(n)} &\leq \sqrt{2 H_0}(1+\epsilon)\\
\left\lvert \frac{(p_s-f(n))^2}{(1-\kappa(s)n)^3} \kappa'(s)n \right\rvert &\leq \frac{2H_0 K' \delta}{1-\epsilon}  \,,
\end{split}
\end{equation}
where in the last estimates we have used the notation of
Theorem~\ref{theo.confinement1} and in particular
$\abs{\kappa} n \leq K \delta \leq \epsilon$.
With our assumption~\eqref{boundness} on $\tilde{B}(n,s)$, we find,
for all $t\in[0,T)$,
\begin{eqnarray*}
\left\lvert p_s(t)\right\rvert  \leq   \vert p_s(0)\vert + \left( M\sqrt{2H_0} + \frac{2H_0 K' \delta}{1-\epsilon} \right) T\,,
\end{eqnarray*}
and thus
\begin{eqnarray}\label{eq.f <C}
\left\lvert  f(n(t)) \right\rvert &\leq& \left\lvert p_s(t)  \right\rvert + \left\lvert p_s(t) - f(n(t)) \right\rvert \leq C(T)\,,
\end{eqnarray}
with
\begin{equation*}
  C(T) = \vert p_s(0)\vert +\sqrt{2H_0}(1+\epsilon) +\left( M\sqrt{2H_0} + \frac{2H_0 K' \delta}{1-\epsilon} \right)  T\,.
\end{equation*}
If the trajectory reaches the boundary at $t=T$ , then
\begin{equation*}
\lim_{t\to T} n(t) = 0\,.
\end{equation*}
This, with \eqref{eq.f <C} and \eqref{eq.lim.of.integral}, gives a contradiction. This proves Theorem \ref{theo.confinement0}.

Now, consider a function $g$ as in Theorem \ref{theo.confinement1}. We have, for all $t\in[0,T)$,
\begin{equation*}
g(n(t)) \leq |f(n(t))| \leq  C(T)\,.
\end{equation*}
From~\eqref{eq:f very large}, we have $\lim_{n\to 0} g(n)>C(T)$; hence
$g$ must take the value $C(T)$ and the conclusion follows.

\subsection{Proof of Proposition \ref{prop.of.confinement2}}
Let us recall \eqref{eq.red.H}. The assumptions of Proposition \ref{prop.of.confinement2} can be written in terms of $V$. 
\begin{enumerate}[\rm (H1)]
\item If
\begin{equation}\label{eq.liminfV1>H}
\liminf_{r\to 1^-} V(r) > H_0\,,
\end{equation}
we consider $\eta=\sup\{x\in(0,1) : V(x)=H_{0}\}\in(0,1)$. Consider a
trajectory $(q(t), p(t))$ with $q(0)\in D(0,1)$. We can assume that
$q(0)\neq 0$.  Let $T$ be the maximal time of existence in
$D(0,1)$. By energy conservation, we have, for all $t\in[0,T)$,
\[V(r(t))\leq H_{0}\,,\]
so that $r(t)\leq\eta$. 

Note that \eqref{eq.liminfV1>H} means 
\[\liminf_{r\to 1^-} \left\lvert G(r) - p_\theta \right\rvert>   \sqrt{2 H_0}\,.\]
Using the usual complex coordinate in the plane $\RR^2$, we can write
$\dot{q}=\left(\dot{r}+i\dot{\theta}r\right)e^{i\theta}$ and thus
\begin{equation*}
  \det (q(t), \dot{q}(t)) = r^2(t) \dot{\theta}(t) = p_\theta - G(r(t))\,.
\end{equation*} 
Finally, we notice that $\|\dot{q}(0)\|=\sqrt{2H_{0}}$ and write
\[G(r) - p_\theta = G(r)-G(r(0)) - [p_\theta - G(r(0))]\,,\] 
which gives~\eqref{eq.criterion.1}.

\item If
\begin{equation}\label{eq.liminfV1=H}
\liminf_{r\to 1^-} V(r) = H_0\,,
\end{equation}
and
\begin{eqnarray*}
\limsup_{r\to 1^-}\frac{V(r)- H_0}{r-1} <0\,,
\end{eqnarray*}
then we must again have 
\[
\sup\{x\in(0,1) : V(x)=H_{0}\} < 1\,,
\]
and we can proceed as above.
\end{enumerate}
\subsection{Proof of Proposition \ref{prop.confinement.3}}
Consider $p_{\theta}=0$. Let
$\abs{V}_\infty := \sup_{r\in(0,1)}\abs{V(r)}$. By assumption,
$\abs{V}_\infty< +\infty$.

Let $r(0) \in (0,1)$ and choose $p_r(0)>0$ such that $ p_r^2(0) = 2\left(|V|_{\infty}-V(r(0))\right)+v^2$, with $v>0$. Since, for all $t\in[0,T)$,
\begin{equation*}
\frac{p_r^2(t)}{2}+V(r(t))=\frac{p_r^2(0)}{2}+V(r(0))\,,
\end{equation*}
we get $\dot{r}(t)= p_r(t) > v$ so that
\begin{equation*}
r(t) > vt+r(0)\,.
\end{equation*}
The particle escapes at $t=\frac{1-r(0)}{v}$.
\subsection{Proof of Theorem \ref{theo.scattering}}

We distinguish between the cases $p_{\theta}=0$ and $p_{\theta}\neq 0$.

\subsubsection{Case when $p_{\theta}\neq 0$}
In this case, $\lim_{r\to 0} V(r)=+\infty$; hence, due to energy
conservation, the trajectory does not approach the origin.
\begin{enumerate}[\rm i.]
\item Assume that $p_{r}(0)<0$. We have $V(1)<H_{0}$ and we can
  consider the right most turning point $r^*\in(0,1)$. By definition
  $V(r^*)=H_{0}$, and necessarily $V'(r^*)\leq 0$.

If $V'(r^*)<0$, it is easy to check that $r$ reaches $r^*$ in finite time, say $t=t^*$. This time is given by
\[t^*=\int_{r^*}^{1}\frac{\dd r}{\sqrt{2(H_{0}-V(r))}}\,.\]
By symmetry, the escape time is $2t^*$. Since $\dot{\theta}=\frac{p_{\theta}-G(r)}{r^2}$, we have
\[\theta(t^*)-\theta(0)=\int_{0}^{t^*} \frac{p_\theta-G(r)}{r^2} \dd t=\int_{0}^{t^*} \frac{(p_\theta-G(r)) \dot{r}}{r^2 p_r} \dd t=\int_{0}^{t^*}  - \frac{(p_\theta-G(r)) \dot{r}}{r^2\sqrt{2(H_0-V(r))}  }\dd t\,,\]
so that
\[\theta(t^*)-\theta(0)= \int_{r^*}^{1}  \frac{p_\theta-G(r)}{r^2\sqrt{2(H_0-V(r))} } \dd r\,.\]
By symmetry, we have
\[\theta(2t^*)-\theta(0)=2\int_{r^*}^{1}  \frac{p_\theta-G(r)}{r^2\sqrt{2(H_0-V(r))} } \dd r\,.\]

If $V'(r^*)=0$, $(r^*,0)$ is a critical point of the Hamiltonian and we get that $r$ reaches $r^*$ in infinite time (see Figure \ref{confine4}).

\item Assume that $p_{r}(0)=0$. Then $V(1)=H_{0}$. By assumption (the
  trajectory enters $D(0,1)$), we have $V'(1)\geq 0$, \textit{i.e.},
  $(p_{\theta}-G(1))B(1)+(p_{\theta}-G(1))^2\leq 0$ . If $V'(1)=0$,
  the particle sits at a fixed point of the Hamiltonian system, and
  hence $r(t)\equiv 1$ is constant. If $V'(1)>0$, it enters $D(0,1)$
  and the discussion is the same as previously.
\end{enumerate}
\begin{figure}[h] 
\centering
\includegraphics[width=0.5\textwidth]{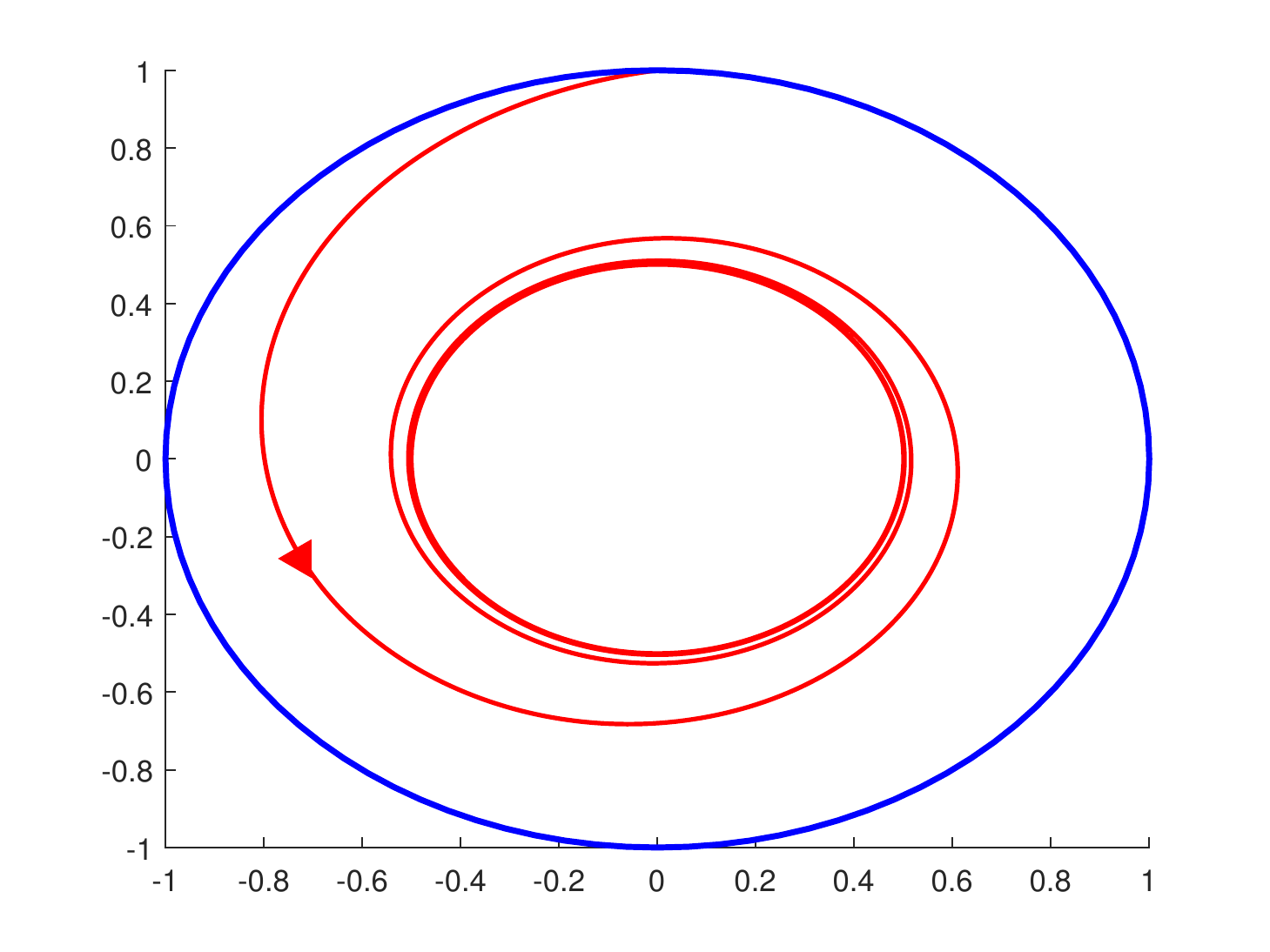}
\caption{$B(r)= e^{-r}-\frac{2}{r}$ .} \label{confine4}
\end{figure}

\subsubsection{Case when $p_{\theta}= 0$}
In this case, since $G(0)=0$, $V(r)=\frac{1}{2r^2}G(r)^2$ admits a
continuous extension at $r=0$.

\begin{enumerate}[\rm i.]
\item Assume that $p_{r}(0)<0$. We have $V(1)<H_{0}$. The existence of
  $r^*$ such that $V(r^*)=H_{0}$ is not ensured. If $V(r)<H_{0}$ on
  $[0,1]$, the particle reaches $r=0$ in finite time $t=t^*$:
\[t^*=\int_{0}^{1}\frac{\dd r}{\sqrt{2(H_{0}-V(r))}}\,.\]
We get, by symmetry,
\[\theta(2t^*)-\theta(0)=2\int_{0}^{1}  \frac{-G(r)}{r^2\sqrt{2(H_0-V(r))} } \dd r+\pi\,.\]
If there exists $r^*\in(0,1)$ such that $V(r^*)=H_{0}$, the trajectory does not reach the origin and the discussion is the same as in the case $p_{\theta}\neq 0$.

\item Assume that $p_{r}(0)=0$. The discussion is the same as when $p_{\theta}\neq 0$.

\end{enumerate}

\subsubsection{Scattering angle}
We can now end the proof of Theorem \ref{theo.scattering}. In terms of complex numbers, we can write
\[v_{1}=(v_{r}(0)+iv_{\theta}(0))e^{i\theta_{1}}\,,\quad v_{2}=(-v_{r}(0)+iv_{\theta}(0))e^{i\theta_{2}}\,.\]
The scattering angle is 
\[\theta_{2}-\theta_{1}+\mathrm{Arg}\left(\frac{-v_{r}(0)+iv_{\theta}(0)}{v_{r}(0)+iv_{\theta}(0)}\right)\,.\]
If $\gamma$ denotes the argument of $v_{r}(0)+iv_{\theta}(0)$, the scattering angle is thus
\[\theta_{2}-\theta_{1}+\pi-2\gamma\,.\]


\appendix
\section{Tubular coordinates}
\begin{lemma} \label{lem.cartesian.to.normal}
We write $\mathbf{A}=A_{1}\dd q_{1}+ A_{2}\dd q_{2}$. With \eqref{eq.psi}, we have
\[\mathbf{A}=A_{n}\dd n+A_{s}\dd s\,,\qquad \tilde A=(A_{n}, A_{s})^\mathrm{T}=(d\psi)^\mathrm{T}(A_{1}, A_{2})^\mathrm{T}\,.\]
We have
\begin{equation} \label{Ham in normal coor}
H(n,s,p_n,p_s) =\mathcal{H}\circ \Psi(n,s,p_{n}, p_{s})= \frac{(p_n - A_n(n,s))^2}{2} + \frac{(p_s- A_s(n,s))^2}{2(1-\kappa (s) n)^2}\,.
\end{equation}
Moreover, $v_{n}=p_n - A_n(n,s)$ and $v_{s}=(1-n\kappa(s))^{-1}(p_{s}-A_{s})$ are the normal and tangential component of $v$.
\end{lemma}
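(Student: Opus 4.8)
The plan is to recognize $\Psi$ as the cotangent lift of the diffeomorphism $\psi$ and to exploit that the moving frame $\{N(s),\gamma'(s)\}$ is orthonormal, so that the coordinate change is diagonal up to the single stretching factor $1-n\kappa(s)$ in the tangential direction.

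First I would compute the Jacobian $J:=(\dd\psi)_{(n,s)}$. Since $\psi(n,s)=\gamma(s)+nN(s)$, differentiation gives $\partial_n\psi=N(s)$ and, using the Frenet relation $N'(s)=-\kappa(s)\gamma'(s)$ together with $\norm{\gamma'(s)}=1$,
\[\partial_s\psi=\gamma'(s)+nN'(s)=(1-n\kappa(s))\gamma'(s)\,.\]
Thus the columns of $J$ are $N(s)$ and $(1-n\kappa(s))\gamma'(s)$. Because $\{N(s),\gamma'(s)\}$ is orthonormal, the first fundamental form is diagonal,
\[J^{\mathrm T}J=\begin{pmatrix}1&0\\0&(1-n\kappa(s))^2\end{pmatrix}\,,\]
and $\det J=\pm(1-n\kappa(s))$, in agreement with \eqref{eq.Bns}.

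The first assertion, $\tilde A=(\dd\psi)^{\mathrm T}(A_1,A_2)^{\mathrm T}$, is merely the pullback formula $\psi^*(A_1\,\dd q_1+A_2\,\dd q_2)=A_n\,\dd n+A_s\,\dd s$, which I would record directly. For the Hamiltonian I use that $\Psi$ is a cotangent lift, so momenta transform by $(J^{-1})^{\mathrm T}$; concretely $(p_n,p_s)^{\mathrm T}=J^{\mathrm T}p$ and likewise $(A_n,A_s)^{\mathrm T}=J^{\mathrm T}(A_1,A_2)^{\mathrm T}$, whence
\[p-\mathbf A=(J^{-1})^{\mathrm T}(p_n-A_n,\,p_s-A_s)^{\mathrm T}\,.\]
Therefore
\[\norm{p-\mathbf A}^2=(p_n-A_n,\,p_s-A_s)\,(J^{\mathrm T}J)^{-1}\,(p_n-A_n,\,p_s-A_s)^{\mathrm T}=(p_n-A_n)^2+\frac{(p_s-A_s)^2}{(1-\kappa(s)n)^2}\,,\]
and dividing by $2$ yields \eqref{Ham in normal coor}.

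For the velocity decomposition I note that $v=\dot q=\partial_p\HH=p-\mathbf A$ and rewrite the identity above in the orthonormal frame. Since $(J^{-1})^{\mathrm T}$ has columns $N(s)$ and $(1-n\kappa(s))^{-1}\gamma'(s)$, it reads
\[v=(p_n-A_n)N(s)+(1-n\kappa(s))^{-1}(p_s-A_s)\gamma'(s)\,,\]
so $v_n=p_n-A_n$ and $v_s=(1-n\kappa(s))^{-1}(p_s-A_s)$ are exactly the normal and tangential components of $v$. The only genuinely delicate steps are the bookkeeping ones: fixing the sign conventions for $\kappa$ and the inward normal $N$ so that $\partial_s\psi=(1-n\kappa)\gamma'$ (rather than $(1+n\kappa)\gamma'$), consistent with \eqref{eq.Bns}, and keeping straight which objects transform by $J^{\mathrm T}$ (covectors such as $\mathbf A$) versus $(J^{-1})^{\mathrm T}$ (the momentum $p$). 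Once the frame is recognized as orthonormal, no substantial computation remains.
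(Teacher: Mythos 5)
Your proof is correct and follows essentially the same route as the paper's: both exploit that $\dd\psi=[N(s)\,,\,(1-n\kappa(s))\gamma'(s)]$ is an orthogonal frame composed with a diagonal stretching, transform the momenta by $(\dd\psi^{-1})^{\mathrm T}$, and read off the Gram matrix $\mathrm{diag}\left(1,(1-n\kappa(s))^{-2}\right)$ to obtain \eqref{Ham in normal coor} and the velocity components. Incidentally, your formula $(J^{-1})^{\mathrm T}=[N(s)\,,\,(1-n\kappa(s))^{-1}\gamma'(s)]$ is the correct one, consistent with the explicit expression for $p$ following \eqref{eq.Psi}; the exponent in the paper's \eqref{eq.dpsi-1T} appears to be a typographical slip.
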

\begin{proof}
We write
\[2H(q,p)=\|p-A\|^2=\|(\dd\psi^{-1})^\mathrm{T}(\tilde p-\tilde A)\|^2=\langle (\dd\psi^{-1})(\dd\psi^{-1})^\mathrm{T}(\tilde p-\tilde A), \tilde p-\tilde A\rangle\,,\]
with $\tilde p=(p_{n}, p_{s})^{\mathrm{T}}$. Note that 
\begin{equation}\label{eq.dpsi-1T}
(\dd\psi^{-1})^\mathrm{T}=[N(s)\,\,, (1-n\kappa(s))\gamma'(s)]\,.
\end{equation}
We get
\[(\dd\psi^{-1})(\dd\psi^{-1})^\mathrm{T}=\begin{pmatrix}1&0\\ 
0&(1-n\kappa(s))^{-2}\end{pmatrix}\,.\]
Concerning the velocity $v$, we write
\[v=p-A=(\dd\psi^{-1})^\mathrm{T}(\tilde p-\tilde A)\,,\]
and we use \eqref{eq.dpsi-1T}.
\end{proof}


\end{document}